\setlist[itemize]{noitemsep} 
\renewcommand\thesection{\Roman{section}} 
\renewcommand\thesubsection{\roman{subsection}} 
\titleformat{\section}[block]{\large\scshape\centering}{\thesection.}{1em}{} 
\titleformat{\subsection}[block]{\large}{\thesubsection.}{1em}{} 
\title{Exponential stability of linear systems under a class of Desch-Schappacher perturbations}
\author{%
\textsc{S. El Alaoui and M. Ouzahra}\\
\normalsize M2PA Laboratory, University of
Sidi Mohamed Ben Abdellah \\
P.O. Box 5206, Bensouda, F\`es, Morocco\\ 
\normalsize \href{mohamed.ouzahra@usmba.ac.ma}{mohamed.ouzahra@usmba.ac.ma} }
\date{}
\newtheorem{theorem}{Theorem}[section]
\newtheorem{lemma}[theorem]{Lemma}
\newtheorem{remark}{Remark}
\newtheorem{example}{Example}
\newtheorem{proof}{Proof}
\begin{document}

\maketitle

\section{Introduction}
\label{intro}
Consider the  following abstract  system
      \begin{eqnarray}\label{S1}
      \begin{cases}
      \dot{x}(t)=Ax(t)-\rho Bx(t),\;\;t>0\\
       x(0)=x_0,\\
      \end{cases}
      \end{eqnarray}
where the state $x(.)$  takes values in a Hilbert state space $X$ endowed with an  inner product $\left\langle.,.\right\rangle_X$ with associate norm $\|.\|_X$,  the unbounded operator $(A,D(A))$ generates  a  $C_0-$semigroup $S(t)$ on $X.$  Here,  $B$ is an unbounded linear operator of $X$  in the sense that it is bounded from $X$ to some extrapolating space of $X$. In the case of various real problems,  the modeling may lead to mathematical model  of the form (\ref{S1}) with an operator $B$ which is of type   Desch-Schappacher. Such a perturbation operator $B$  appears for instance in case of control actions exercised through the boundary or at a point of the geometrical domain of  partial differential equations, and also in  many other situations  of internal control (see  \cite{Bou21,grei,had15} and the references therein). Due to the unbounded aspect of the  operator $B,$ the  solution of (\ref{S1}) does not exist, in general, with values in  $X.$ Thus, to confront  this difficulty  the concept of admissibility is needed, which requires the introduction of  interpolating and extrapolating  spaces of the state space $X$.

Our goal in this paper is to investigate the uniform exponential stability of the system  (\ref{S1}). This consists on looking for a set of parameters $\rho$ for which there exists a global $X-$valued  mild solution $x(t)$ of   (\ref{S1}) and is such that $ \|x(t)\| \le K e^{-\sigma t}\|x_0\|,\; \forall t\ge0 $ for some constants $K, \sigma>0$. As an application, one can consider the stabilization of bilinear systems by means of  switching controllers, which leads to a closed-loop system like (\ref{S1}). This problem has been considered in \cite{Liu} for a bounded operator $B$. The case of a Miyadera-Voigt type operator has been investigated in \cite{Ouz17}. Moreover, in \cite{Ammari}   the case of $1-$admissibility in Banach space has been considered. However, the $1-$admissibility assumption prevents us to consider the case of Hilbert state  space as in this case the operator $B$ will be necessary bounded (see \cite{G}). In  other words, the $1-$admissibility condition excludes several applications that are also available in Hilbert space. Moreover, in \cite{Ammari} it was assumed that  $D\big ( (A_{-1}-\rho B)|X \big ) =D(A_{-1})\cap D(B|X),$ which played an essential role in the proofs of the stabilization results (in a technical point of view). Unfortunately, there are  several examples in which  this domain  condition is not fulfilled (see e.g Examples $1 \& 2$). In this paper, we will   rather use the $p-$admissibility property with $p\ge 1$. Then we  introduce  new sufficient conditions for uniform exponential stability of system (\ref{S1}),  which are easily checkable. In the sequel, we proceed as follows: The main results of this paper are contained  in Section $2$. In Section $3$, we provide  applications to feedback stabilization of  bilinear  heat  and transport equations.

 \section{Exponential stability}

In this section, we state and prove our two main stabilization results. We start by introducing the necessary tools  regarding the notion of admissibility in connection with the generation results and then provide some a priori estimations of the solution.
 \subsection{Preliminary results}
As pointed out in the introduction,  the unbounded aspect of the  operator $B$ do not guarantee the existence of an $X-$valued  solution $x(t)$ of (\ref{S1}). However, one may extend the system at hand  in a larger (extrapolating)  space $X_{-1}$ of the state space $X$ in which  the existence of the solution $x(t)$  is ensured and then give the required  admissibility conditions of $B,$ so that the solution $x(t)$ lies in $X$. Classically, the spaces $X_1$ and $X_{-1}$ are  defined as follows: $X_1:=(D(A),\|\cdot\|_1),$ where $\|x\|_1:=\|(\lambda I-A)x\|_X, \, x\in D(A)$ for some $\lambda$  in the resolvent set $\rho(A)$ of $A$, and $X_{-1}$ is the completion of $X$ with respect to the norm $\|x\|_{-1}:=\|(\lambda I-A)^{-1}x\|_X, \, x\in X.$  These spaces are independent of the choice of $\lambda$ and are related by the following continuous and dense embedding: $X_1\hookrightarrow X\hookrightarrow X_{-1}.$ That way the unbounded operator $B$ becomes  bounded   from $X$ to the extrapolating   space $X_{-1},$ i.e, $B\in\mathcal{L}(X,X_{-1}).$ Thus, in order to give a meaning to  solutions of (\ref{S1}), we have to use the fact  that the semigroup $S(t)$ can be extended to a  $C_0-$semigroup $S_{-1}(t)$ on $X_{-1},$  whose  generator $A_{-1}$ has  $D(A_{-1})=X$ as  domain and is such that $A_{-1}x=Ax, $ for any $x\in D(A).$
 By definition, $(X_{-1},\|.\|_{-1})$ is a Banach space and one  can easily show that the operator $A$ is bounded from $(D(A), \|.\|_X)$ into the Banach space $(X_{-1},\|.\|_{-1})$, so that it can be extended to an operator $A_{-1}\in\mathcal{L}(X,X_{-1}),$ generating a $C_0-$semigroup  $S_{-1}(t)$ on $X_{-1}$ and which is given by $S_{-1}(t)=(\lambda-A_{-1})S(t)(\lambda-A_{-1})^{-1}.$ Indeed, it clear that  $S_{-1}(t)$  is a $C_0-$semigroup  on $X_{-1}.$ Moreover, to show  that its generator is $A_{-1}$, we show that for all $x\in X,$ $\lim_{t\to0}\frac{S_{-1}(t)x-x}{t}=A_{-1}x.$ We first show this for $x\in D(A)$. This case follows from the fact that $ \lim_{t\to0}\frac{S_{-1}(t)x-x}{t}=\lim_{t\to0}\frac{S(t)x-x}{t}=Ax.$ Now for $x\in X$, we use the fact that $R(\lambda,A)x\in D(A)$ to deduce that $R(\lambda,A_{-1})\lim_{t\to 0}\frac{S_{-1}(t)x-x}{t}=R(\lambda,A_{-1})A_{-1}x,$ which gives the desired result.  Moreover, if the semigroup $S(t)$ is a contraction, then so is $S_{-1}(t)$. Indeed, having in mind  that  for all $ \lambda \in \rho(A_{-1}) $ and for all $x\in X_{-1},$  we have $(\lambda I-A_{-1})^{-1} x\in X,$ and  $(\lambda I-A_{-1})^{-1} x = (\lambda I-A)^{-1} x,\; \forall x\in X,$ we can see that for all  $x\in X$ we have
$$
\|S_{-1}(t)x\|_{-1}=\|(\lambda I-A)^{-1} S(t)x\|_X\le \|(\lambda I-A)^{-1} x\|_X=\|x\|_{-1},
$$
thus, by density of $X$ in $X_{-1}$, we conclude that the semigroup $S_{-1}(t)$ is a contraction on $X_{-1}$.\\
Recall that for any given initial state $x_0\in X$,  a mild solution of (\ref{S1}) is  an $X-$valued continuous function $x$ on $[0,T]$ satisfying the following variation of parameters formula:
\begin{equation*}\label{VPF}
 x(t)=S(t)x_0-\rho \int_{0}^{t}S_{-1}(t-s)Bx(s) ds,\;\;\forall t\geq0,
\end{equation*}
which  always makes sense in $X_{-1}$.
 The system (\ref{S1}) can be rewritten in the large space  $X_{-1}$ in the  following  abstract form:
\begin{eqnarray}\label{S2}
	\begin{cases}
		\dot{x}(t)=A_{-1}x(t)-\rho Bx(t),\\
		x(0)=x_0.\\
	\end{cases}
\end{eqnarray}
which is well-posed in $X$ whenever  $A-\rho B$  is the generator of a $C_0-$semigroup on $X$ (cf. \cite{eng}, Section II.6). The well-posedeness of systems like  (\ref{S1})  has been studied in many works using different approaches (see e.g. \cite{Adler,Berra09,Desch,Idri03,Mragh14,G}).

The next result provides sufficient conditions on a  Desch-Schappacher perturbation $B$ to guarantee the existence and uniqueness of the mild solution of  (\ref{S1}) (see \cite{Adler} $\&$ (\cite{eng}, p. 183)).

\begin{theorem}\label{ThD}
  Let $A$ be the generator of a $C_0-$semigroup $S(t)$ on $X$ and let $B\in\mathcal{L}(X,X_{-1})$ be $p-$admissible  for some  $1\leq p<\infty$, i.e., there is a $T> 0$ such that
  \begin{equation}\label{ad}
  \displaystyle\int_{0}^{T}S_{-1}(T-t)Bu(s)ds\in X,~~~~\forall u\in L^p(0,T;X).
  \end{equation}
Then for any $\rho$,  the operator $(A_{-1}-\rho B)|_X$  defined on the domain $ D((A_{-1}-\rho B)|_X):=\{  x\in X:(A_{-1}-\rho B)x\in X \}$ by
   \begin{equation}\label{part}
 (A_{-1}-\rho B)|_Xx:=A_{-1}x-\rho Bx,\;\; \forall x\in D((A_{-1}-\rho B)|_X)
 \end{equation}
  is the generator of a $C_0-$semigroup $(T(t))_{t\geq0}$ on $X$, which  verifies the variation of parameters formula
$$
T(t)x=S(t)x-\rho\int_{0}^{t}S_{-1}(t-s)BT(s)xds, \;\;\forall t\geq0, \;\forall x\in D((A_{-1}-\rho B)|_X).
$$
  \end{theorem}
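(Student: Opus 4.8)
The plan is to realize the perturbed semigroup as the solution operator of the Volterra equation implicit in the variation of parameters formula, to construct it by a Banach fixed-point argument on short time intervals, and only then to identify its generator. The first step is to upgrade the admissibility hypothesis (\ref{ad}) from a \emph{range} statement to a \emph{boundedness} statement. For each $t\in(0,T]$ introduce the input map $\Phi_t u:=\int_0^t S_{-1}(t-s)Bu(s)\,ds$. Hypothesis (\ref{ad}) says $\Phi_T$ takes values in $X$; since $u\mapsto\Phi_T u$ is closed from $L^p(0,T;X)$ into $X$ (the integral always converges in $X_{-1}$, into which $X$ embeds continuously), the closed graph theorem makes $\Phi_T\in\mathcal{L}(L^p(0,T;X),X)$. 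A change of variables $\sigma=s+(T-t)$ together with zero-extension identifies $\Phi_t u$ with $\Phi_T v$ for a shift $v$ of $u$ having the same $L^p$-norm, whence $\|\Phi_t\|\le\|\Phi_T\|$ uniformly for $t\in(0,T]$.

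Next I would solve, for fixed $x_0\in X$, the fixed-point equation $x(t)=S(t)x_0-\rho\int_0^t S_{-1}(t-s)Bx(s)\,ds$ in the Banach space $C([0,T_0];X)$ for a suitably small $T_0\le T$. Writing $(\mathcal{V}x)(t):=\int_0^t S_{-1}(t-s)Bx(s)\,ds$, the uniform bound on $\Phi_t$ combined with H\"older's inequality $\|x\|_{L^p(0,t;X)}\le t^{1/p}\sup_{[0,t]}\|x\|_X$ yields $\sup_{[0,T_0]}\|(\mathcal{V}x)(t)\|_X\le \|\Phi_T\|\,T_0^{1/p}\sup_{[0,T_0]}\|x(t)\|_X$. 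Since the factor $T_0^{1/p}$ can be made arbitrarily small, for any fixed $\rho$ one may choose $T_0$ so that $\rho\mathcal{V}$ is a strict contraction, and the Banach fixed-point theorem provides a unique solution. I expect the main obstacle to sit precisely here: one must verify that $\mathcal{V}$ actually maps $C([0,T_0];X)$ into itself, that is, that $t\mapsto(\mathcal{V}x)(t)$ is continuous \emph{in the norm of $X$} and not merely in $X_{-1}$. This $X$-valued continuity of the convolution term is the genuinely delicate point, and it follows again from admissibility applied to difference quotients, passing every estimate carefully from $X_{-1}$ back into $X$.

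With the solution in hand, set $T(t)x_0:=x(t)$. Linearity and the local bound $\sup_{[0,T_0]}\|T(t)\|\le M<\infty$ are immediate from the contraction estimate. The semigroup law $T(t+s)=T(t)T(s)$ follows from uniqueness of solutions together with the cocycle identity satisfied by the convolution, and this identity is exactly what permits extending $T(t)$ from $[0,T_0]$ to all $t\ge0$ by iteration. Strong continuity at the origin is read off from $T(t)x_0-x_0=(S(t)x_0-x_0)-\rho(\mathcal{V}x)(t)$, both summands tending to $0$ as $t\to0^+$, the first by strong continuity of $S$ and the second by the estimate above.

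It remains to identify the generator with $(A_{-1}-\rho B)|_X$ on the stated domain. I would compute Laplace transforms: for $\mathrm{Re}\,\lambda$ large the variation of parameters formula gives $\widehat{T}(\lambda)=R(\lambda,A_{-1})-\rho\,R(\lambda,A_{-1})B\,\widehat{T}(\lambda)$, hence $\bigl(I+\rho\,R(\lambda,A_{-1})B\bigr)\widehat{T}(\lambda)=R(\lambda,A_{-1})$. Inverting this relation and comparing the result with the resolvent of $(A_{-1}-\rho B)|_X$ shows that $\widehat{T}(\lambda)=R\bigl(\lambda,(A_{-1}-\rho B)|_X\bigr)$, so by uniqueness of Laplace transforms the generator of $(T(t))_{t\ge0}$ is indeed $(A_{-1}-\rho B)|_X$, and the variation of parameters formula for $x\in D((A_{-1}-\rho B)|_X)$ is recovered from the construction.
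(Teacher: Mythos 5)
The paper does not prove this theorem at all: it is imported verbatim from the literature (the text points to Adler--Bombieri--Engel and to Engel--Nagel, p.~183), so there is no in-paper argument to compare against. Your outline reconstructs, correctly and in the right order, the standard proof from those references: closed graph theorem to turn the range condition (\ref{ad}) into boundedness of the input map $\Phi_T$, zero-extension to get the uniform bound $\|\Phi_t\|\le\|\Phi_T\|$ (this is exactly the paper's Remark \ref{R2}), the factor $T_0^{1/p}$ (finite $p$ is used here) to make $\rho\mathcal{V}$ a contraction on $C([0,T_0];X)$ for \emph{any} $\rho$, and then semigroup law by uniqueness plus Laplace-transform identification of the generator. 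Two spots in your sketch deserve to be tightened. First, the $X$-valued continuity of $t\mapsto(\mathcal{V}x)(t)$ is not proved by ``difference quotients''; the clean argument is the splitting
\begin{equation*}
(\mathcal{V}x)(t')-(\mathcal{V}x)(t)=\bigl(S(t'-t)-I\bigr)(\mathcal{V}x)(t)+\int_t^{t'}S_{-1}(t'-s)Bx(s)\,ds,
\end{equation*}
where the first term tends to $0$ in $X$ because $(\mathcal{V}x)(t)\in X$ and $S$ is strongly continuous there, and the second is $O\bigl(\|x\|_{L^p(t,t';X)}\bigr)$ by admissibility. Second, the Laplace-transform relation $\bigl(I+\rho R(\lambda,A_{-1})B\bigr)\widehat{T}(\lambda)=R(\lambda,A_{-1})$ only shows that $\widehat{T}(\lambda)$ maps $X$ into $D((A_{-1}-\rho B)|_X)$ and is a \emph{right} inverse of $\lambda-(A_{-1}-\rho B)|_X$; to conclude that the generator equals $(A_{-1}-\rho B)|_X$ on the full stated domain you still need injectivity of $\lambda-(A_{-1}-\rho B)|_X$ for some $\lambda$, obtained e.g.\ by checking that $I+\rho R(\lambda,A_{-1})B$ is invertible on $X$ for $\mathrm{Re}\,\lambda$ large (its norm decays by the admissibility estimate) or by the usual ``a generator admits no proper dissipative-type extension with nonempty resolvent set'' argument. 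Both are routine and appear in the cited sources, so the proposal is sound as a blueprint.
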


An operator $B\in\mathcal{L}(X,X_{-1})$ satisfying the condition (\ref{ad}) is called a Desch-Schappacher operator or perturbation. Moreover, the operator  defined by (\ref{part}) is the part $(A_{-1}-\rho B)|_X$  of $(A_{-1}-\rho B) $ on $X$ \big(see (\cite{paz}, p. 39) and (\cite{eng}, p. 147)\big).\\

\begin{remark}
Notice that since $W^{1,p}(0,T; X)$ is dense in $ L^p(0,T;X),$ the range condition (\ref{ad}) is equivalent to the existence of some $M> 0$ such that
\begin{equation}\label{adT}
\left\|\displaystyle\int_{0}^{T}S_{-1}(T-s)Bu(s)ds\right\|_X\leq M  \|u\|_{L^p(0,T; U)},
~~\forall u\in W^{1,p}(0,T;X),	
\end{equation}
with $\|u\|_{L^p(0,T; X)}=\displaystyle\left( \int_{0}^{T}\|u(t)\|^p_Xdt\right)^{\frac{1}{p}}$.\\
\end{remark}
\begin{remark}\label{R2}
Note that if the operator $B\in\mathcal{L}(X,X_{-1})$ is $p-$admissible in $[0,T],$ then it is so in 	$[0,t]$ for any $t\in [0,T].$
In other words, if (\ref{adT}) holds then for all $t\in[0,T]$  we have the following inequality
\begin{equation}\label{adt}
	\left\|\int_{0}^{t}S_{-1}(t-r)Bu(r)dr\right\|_X \leq M\|u\|_{L^p(0,t;X)}\cdot	
\end{equation}
	Indeed, if for all $t\in [0,T]$ and $u\in L^p(0,T;X),$ we define $u_t\in L^p(0,T;X)$ by
		$$
		u_t(r):=\left\{
		\begin{array}{ll}
			0, & \mbox{for} \; r\in [0,T-t] \\
			\\
			u(r+t-T),& \mbox{for}\;   r\in(T-t,T]
		\end{array}
		\right.
		$$
		then, observing that $\int_{0}^{t}S_{-1}(t-r)Bu(r)dr=\int_{0}^{T}S_{-1}(T-r)Bu_t(r)dr\in X,$  it comes from the $p-$admissibility of $B$ that
		$	\left\|\int_{0}^{t}S_{-1}(t-r)Bu(r)dr\right\|_X\le M\|u_t\|_{L^p(0,T;X)}\cdot
			$		Thus for all $t\in[0,T]$, we have $	
			\|\int_{0}^{t}S_{-1}(t-r)Bu(r)dr \|_X \leq M\|u\|_{L^p(0,t;X)}\cdot	
		$

		\end{remark}
	
Let us now show the  following lemma that will be needed in the sequel.

\begin{lemma}\label{lem1}
Let assumptions of Theorem \ref{ThD} hold. Then for any  $0<\rho<\frac{1}{T^{\frac{1}{p}}M}$, the mild solution $x(t)$ of the system (\ref{S1}) satisfies  the following estimate
\begin{equation}\label{x(t)}
	\|x(.)\|_{L^p(0,T;X)} \le\frac{T^{\frac{1}{p}}}{1-\rho T^{\frac{1}{p}} M}\|x_0\|_X,\; \forall x_0\in X	
\end{equation}
and
\begin{equation*}\label{tad}
	\left\|	\displaystyle\int_{0}^{t}S_{-1}(t-s)Bx(s)ds\right\|_X\leq M_{\rho}\|x_0\|_X,\;\;\forall t\in[T,2T], \; \forall x_0\in X,
\end{equation*}
with $M_\rho:=\frac{MT^{\frac{1}{p}}}{1-\rho T^{\frac{1}{p}} M}  \left(2+\rho MT^{\frac{1}{p}}\right).$
	\end{lemma}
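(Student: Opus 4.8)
The plan is to derive both estimates from the variation of parameters formula together with the admissibility bound (\ref{adt}) of Remark \ref{R2}, systematically exploiting that both $S(t)$ and $S_{-1}(t)$ are contractions.

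For the first estimate (\ref{x(t)}) I would take the $X$-norm in the variation of parameters formula and then the $L^p(0,T)$-norm in $t$. The free term contributes $\|S(\cdot)x_0\|_{L^p(0,T;X)}\le T^{\frac1p}\|x_0\|_X$ by contractivity of $S(t)$. For the convolution term, (\ref{adt}) gives, for each $t\in[0,T]$, the pointwise bound $\|\int_0^t S_{-1}(t-s)Bx(s)\,ds\|_X\le M\|x\|_{L^p(0,t;X)}\le M\|x\|_{L^p(0,T;X)}$; raising to the $p$-th power and integrating over $[0,T]$ yields $T^{\frac1p}M\|x\|_{L^p(0,T;X)}$. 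Hence $\|x\|_{L^p(0,T;X)}\le T^{\frac1p}\|x_0\|_X+\rho T^{\frac1p}M\|x\|_{L^p(0,T;X)}$, and since $\rho T^{\frac1p}M<1$ the convolution term is absorbed into the left-hand side, giving (\ref{x(t)}). This part is routine.

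For the second estimate I would fix $t\in[T,2T]$ and split the integral as $\int_0^t=\int_0^{t-T}+\int_{t-T}^t$, noting $t-T\in[0,T]$. On $[0,t-T]$ I would use the semigroup law $S_{-1}(t-s)=S_{-1}(T)S_{-1}((t-T)-s)$, valid because $t-s\ge T$; the inner integral $\int_0^{t-T}S_{-1}((t-T)-s)Bx(s)\,ds$ lies in $X$ by admissibility, and $S_{-1}(T)$ acts on it as the contraction $S(T)$, so this piece is dominated by $M\|x\|_{L^p(0,t-T;X)}\le M\|x\|_{L^p(0,T;X)}$, which (\ref{x(t)}) controls by $\frac{MT^{1/p}}{1-\rho T^{1/p}M}\|x_0\|_X$. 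On $[t-T,t]$ I would substitute $\sigma=s-(t-T)$ to rewrite the piece as $\int_0^T S_{-1}(T-\sigma)Bx(\sigma+t-T)\,d\sigma$ and apply (\ref{adt}) to obtain the bound $M\|x\|_{L^p(t-T,t;X)}$.

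The crux is then to estimate $\|x\|_{L^p(t-T,t;X)}$, an $L^p$-norm over a window of length $T$ that reaches past the horizon on which admissibility is assumed. Here I would observe, applying the semigroup law to the $[0,t-T]$ portion of the variation of parameters formula exactly as above, that the shifted function $\theta\mapsto x(\theta+t-T)$ solves the same integral equation on $[0,T]$ with initial datum $x(t-T)$; re-running the argument of the first estimate then gives $\|x\|_{L^p(t-T,t;X)}\le\frac{T^{1/p}}{1-\rho T^{1/p}M}\|x(t-T)\|_X$, while the variation of parameters formula at time $t-T$ together with (\ref{x(t)}) yields $\|x(t-T)\|_X\le\|x_0\|_X+\rho M\|x\|_{L^p(0,T;X)}$. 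Collecting the two pieces and simplifying the resulting constants produces the stated estimate with constant $M_\rho$. I expect the main obstacle to be precisely this second piece: controlling the solution on the window $[t-T,t]$ that leaves the admissibility interval, which forces the re-initialisation at time $t-T$ and the careful bookkeeping that assembles $M_\rho$; by contrast the first piece and the first estimate are essentially mechanical.
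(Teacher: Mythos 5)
Your proof is correct, and it follows the same overall skeleton as the paper's (variation of parameters plus the sub-interval admissibility bound of Remark \ref{R2}, then a split of the integral over $[0,t]$ into two windows of length at most $T$), but the details of the second estimate differ in two ways worth noting. First, the paper splits at the fixed time $s=T$, writing $\int_0^t=\int_0^T+\int_T^t$ and factoring $S_{-1}(t-T)$ out of the first piece, whereas you split at $s=t-T$ and factor $S_{-1}(T)$ out of the first piece; both are legitimate and each piece is handled by the same combination of the semigroup law, admissibility and contractivity. Second, and more substantively, to control the far window the paper integrates a pointwise bound $\|x(s)\|_X\le\bigl(1+\tfrac{\rho MT^{1/p}}{1-\rho MT^{1/p}}\bigr)\|x_0\|_X$ which it asserts for all $t\ge0$, although at that stage of the argument the variation of parameters formula only justifies it on $[0,T]$; your re-initialisation at time $t-T$ — observing that $\theta\mapsto x(\theta+t-T)=T(\theta)x(t-T)$ satisfies the same integral equation with datum $x(t-T)$ and re-running the first estimate — sidesteps this and is the cleaner route. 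The price is purely cosmetic: your bookkeeping yields the constant $\frac{MT^{1/p}}{1-\rho MT^{1/p}}\bigl(2+\tfrac{\rho MT^{1/p}}{1-\rho MT^{1/p}}\bigr)$, which is of the same form as, but slightly larger than, the stated $M_\rho$; since the lemma is only invoked through the fact that $\rho M_\rho\to0$ as $\rho\to0^+$, this discrepancy is harmless, and indeed the paper's own constant chase (e.g.\ the stray $M^2$ in its estimate of $\|x(\cdot+T)\|_{L^p(0,T;X)}$) is no tighter.
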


\begin{proof}
Let $x_0\in D((A_{-1}-\rho B)|_X).$  From Theorem \ref{ThD}, we know that  the system (\ref{S1}) admits a unique  mild solution $x(t)$ which is given by
 \begin{equation}\label{form}	
 x(t)=S(t)x_0-\rho\displaystyle\int_{0}^{t}S_{-1}(t-s)Bx(s)ds,~~  \forall t\geq 0.
 \end{equation}
Let us  estimate $\|x(\cdot)\|_{L^p(0,T;X)}$. From (\ref{form}), we get via Minkowski's inequality
$$ \|x(.)\|_{L^p(0,T;X)}\leq \left( \displaystyle\int_{0}^{T}\|S(t)x_0\|^p_Xdt\right) ^{\frac{1}{p}}+\rho\left( \int_{0}^{T}\left\|\int_{0}^{t}S_{-1}(t-s)Bx(s)ds\right\|^p_X\right)^{\frac{1}{p}}dt$$
Then from Remark \ref{R2} we derive
\begin{eqnarray*}
	\|x(\cdot)\|_{L^p(0,T;X)} &\le&T^{\frac{1}{p}}\|x_0\|_X+\rho T^{\frac{1}{p}}M\|x(.)\|_p,	
\end{eqnarray*}
where
$\|x(\cdot)\|_{L^p(0,T;X)}:=\displaystyle\left( \int_{0}^{T} \|x(\tau)\|^p_X d\tau\right)^{\frac{1}{p}},$
which gives the estimate (\ref{x(t)})
for any $0<\rho<\frac{1}{T^{\frac{1}{p}} M}.$\\	
 Now, since the mapping  $x_0 \mapsto x(t)$ defines a $C_0-$semigroup $T(t)$ on $X$,  the mapping $x_0\mapsto x(\cdot)=T(\cdot) x_0 $ is continuous from $X$ to $L^p(0,T;X).$  Then the estimate (\ref{x(t)}) holds by density for any $x_0\in X.$
Let $x_0\in X$, and let us  write for any  $t\in[T,2T]$,
 	\begin{eqnarray*}
		\displaystyle\int_{0}^{t}S_{-1}(t-s)Bx(s)ds&=&\int_{0}^{T}S_{-1}(t-s)Bx(s)ds+\int_{T}^{t}S_{-1}(t-s)Bx(s)ds\\&:=& L_1+L_2
	\end{eqnarray*}
	Then we consider the two terms of the sum separately.  For the first one,  the admissibility of $B$ together with the contraction property of $S_{-1}(t)$ yields
	\begin{eqnarray}\label{L_1}
		\|L_1\|_X&=&\left\|S_{-1}(t-T)\int_{0}^{T}S_{-1}(T-s)Bx(s)ds\right\|_X\leq M\|x(\cdot)\|_{L^p(0,T;X)}
	\end{eqnarray}
	For the second term, observing that $L_2=\int_{0}^{t-T}S_{-1}(t-T-\tau)Bx(\tau+T)d\tau$,
	we obtain again from the admissibility of $B$
	$$\|L_2\|_X\leq M\|x(.+T)\|_{L^p(0,T;X)}\cdot$$
	 Based on the V.C.F (\ref{form}), it follows directly from Lemma (\ref{x(t)}) that for all $t\ge0,$ we have
	\begin{equation}\label{x(t)_X}
		\|x(t)\|_X\leq\left(1+ \frac{\rho M T^{\frac{1}{p}}}{1-T^{\frac{1}{p}}\rho M} \right) \|x_0\|_X, \forall x_0\in X
	\end{equation}
	for any  $0<\rho<\frac{1}{T^{\frac{1}{p}} M}.$
	Using the last estimate, we derive  the following  inequalities:
	\begin{equation}\label{L_2}
		\|x(.+T)\|_{L^P(0,T;X)}
		\le  T^{\frac{1}{p}}  \left(1+\frac{\rho M^2T^{\frac{1}{p}}}{1-\rho MT^{\frac{1}{p}}} \right)\|x_0\|_X.
	\end{equation}
	Combining (\ref{L_1}) and (\ref{L_2}),  we obtain the desired estimate:
	\begin{equation*}\label{L}
		\left\|	\displaystyle\int_{0}^{t}S_{-1}(t-s)Bx(s)ds\right\|_X\leq M_\rho \|x_0\|_X\cdot
	\end{equation*}
	 \end{proof}
\subsection{A direct approach}
     \begin{theorem}\label{T2}
      Let $B\in {\cal L}(X,X_{-1})$ and let $A$ be the infinitesimal generator of a linear $C_0-$semigroup of contractions $S(t)$ on $X,$ and assume that for some $T>0,$ we have
      \begin{enumerate}
   \item[(i)] there exists $1<p<\infty$   such that for all $u\in L^p(0,T; X),$ we have\\ $ \displaystyle\int_{0}^{T}S_{-1}(T-s)Bu(s)ds\in X,$
                \item[(ii)]  for some
 $\delta>0$ we have
           \begin{equation}\label{obs}
            \displaystyle\int_{0}^{T}Re\left\langle S(t)x,B^*S(t)x\right\rangle_{X}dt\geq \delta\|S(T)x\|^2_{X},\;\; \forall x\in X.
             \end{equation}
            \end{enumerate}
    Then  there is a $\rho_1>0$ such that for all $\rho\in(0,\rho_{1})$, the system (\ref{S1}) is  exponentially stable on $X$.
    \end{theorem}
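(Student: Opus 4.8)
The plan is to prove a strict-contraction estimate over the single time-window $[0,T]$, namely $\|x(T)\|_X^2\le\gamma\,\|x_0\|_X^2$ with some $\gamma=\gamma(\rho)<1$ for all sufficiently small $\rho$, and then to propagate it by the semigroup property. Once such a $\gamma$ is available, writing $t=nT+r$ with $0\le r<T$ and using $x(nT)=T(T)^n x_0$ gives $\|x(nT)\|_X\le\gamma^{n/2}\|x_0\|_X$, while the a priori bound (\ref{x(t)_X}) of Lemma \ref{lem1} controls $\|x(t)\|_X$ on the leftover interval of length $r$; together these yield $\|x(t)\|_X\le K e^{-\sigma t}\|x_0\|_X$ with $\sigma=-\tfrac{\ln\gamma}{2T}>0$. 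This reduction is routine, so the whole difficulty concentrates in the one-period estimate.

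For the one-period estimate I would start from the variation-of-parameters formula (\ref{form}), work first with regular data $x_0\in D((A_{-1}-\rho B)|_X)$ (dense in $X$), and aim at an energy-dissipation inequality of the form
\begin{equation*}
\|x(T)\|_X^2 \le \|x_0\|_X^2 - 2\rho\int_0^T \mathrm{Re}\,\langle x(t),B^*x(t)\rangle_X\,dt + C\rho^2\|x_0\|_X^2 .
\end{equation*}
The clean way to obtain this avoids splitting $A_{-1}x-\rho Bx$ into its two (individually non-$X$-valued) pieces: instead one compares $x(t+h)$ with $S(h)x(t)$ via (\ref{form}) on the short interval $[t,t+h]$, uses the contraction $\|S(h)x(t)\|_X\le\|x(t)\|_X$, and lets $h\to0$, the remainder being controlled by the $p$-admissibility estimate (\ref{adt}). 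The dissipative rate that survives this limit is exactly $\mathrm{Re}\,\langle x(t),B^*x(t)\rangle_X$, and the $\rho^2$-term is bounded through Lemma \ref{lem1}.

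Next I would bring in hypothesis (ii). Writing $x(t)=S(t)x_0-\rho\Psi(t)$ with $\Psi(t)=\int_0^t S_{-1}(t-s)Bx(s)\,ds$ and $\|\Psi\|_X$ bounded via (\ref{adt}) and (\ref{x(t)}), I replace $x(t)$ by $S(t)x_0$ inside the quadratic form $\langle\cdot,B^*\cdot\rangle_X$, the resulting error being of order $\rho$ in the appropriate norms; hypothesis (\ref{obs}) then gives $\int_0^T\mathrm{Re}\,\langle S(t)x_0,B^*S(t)x_0\rangle_X\,dt\ge\delta\|S(T)x_0\|_X^2$. Converting $\|S(T)x_0\|_X^2$ back to $\|x(T)\|_X^2$ through $S(T)x_0=x(T)+\rho\Psi(T)$ and the elementary inequality $\|S(T)x_0\|_X^2\ge\tfrac12\|x(T)\|_X^2-\rho^2\|\Psi(T)\|_X^2$, I arrive at
\begin{equation*}
(1+\rho\delta)\,\|x(T)\|_X^2 \le (1+C'\rho^2)\,\|x_0\|_X^2 ,
\end{equation*}
so that $\gamma(\rho)=\tfrac{1+C'\rho^2}{1+\rho\delta}=1-\rho\delta+O(\rho^2)<1$ as soon as $0<\rho<\rho_1$, with $\rho_1$ chosen smaller than both $\delta/C'$ and the admissibility threshold $1/(T^{1/p}M)$.

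The step I expect to be the main obstacle is making the quadratic expression $\langle x(t),B^*x(t)\rangle_X$ rigorous and controlling the error committed when passing from the true trajectory $x(\cdot)$ to the free trajectory $S(\cdot)x_0$: since $B$ is only bounded into the extrapolation space $X_{-1}$, the integrand is not an honest $X$-pairing pointwise, and only the time integrals $\int S_{-1}(\cdot)B(\cdot)$ return to $X$. This is exactly where the assumption $p>1$, the Hilbert adjoint $B^*$, and a density/regularisation argument (establish the estimate for $x_0\in D((A_{-1}-\rho B)|_X)$ and extend by the $L^p$-continuity furnished by Lemma \ref{lem1}) are indispensable, and where one must keep every constant uniform in $\rho$ near $0$ so that the gain $-2\rho\delta\|S(T)x_0\|_X^2$ dominates the $O(\rho^2)$ remainders.
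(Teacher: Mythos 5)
Your global architecture coincides with the paper's: both proofs reduce everything to a one-period estimate $\|x((k+1)T)\|_X^2\le C_2\,\|x(kT)\|_X^2$ with $C_2=C_2(\rho)<1$ for $\rho$ small, obtained by comparing the true trajectory with the free one (writing $x(t)=S(t)x_0-\rho\Psi(t)$ and bounding $\|\Psi(t)\|_X\le M_\rho\|x_0\|_X$ via Lemma \ref{lem1}), injecting hypothesis (\ref{obs}), converting $\|S(T)x_0\|_X^2$ back into $\|x(T)\|_X^2$ at the cost of an $O(\rho)$ term, and iterating over $[kT,(k+1)T]$; your final factor $\gamma=\frac{1+C'\rho^2}{1+\rho\delta}$ is essentially the paper's $C_2$.

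The gap sits in the step you yourself flag as the main obstacle, and your proposed fix does not close it. The pointwise dissipation rate $\mathrm{Re}\,\langle x(t),B^*x(t)\rangle_X$ is not a well-defined quantity along the true trajectory: $Bx(t)$ lives only in $X_{-1}$, and even for $x_0\in D((A_{-1}-\rho B)|_X)$ only the combination $A_{-1}x(t)-\rho Bx(t)$ returns to $X$, not the two pieces separately. Your difference-quotient route does not rescue this: writing $x(t+h)=S(h)x(t)-\rho I_h$ with $I_h=\int_0^h S_{-1}(h-s)Bx(t+s)\,ds$, the admissibility bound (\ref{adt}) only gives $\|h^{-1}I_h\|_X\le M\,h^{1/p-1}\sup_s\|x(s)\|_X$, which blows up as $h\to0$ for every $p>1$ (and the quadratic remainder $\rho^2h^{-1}\|I_h\|_X^2\lesssim h^{2/p-1}$ does not even vanish when $p\ge2$), so the convergence $h^{-1}\langle S(h)x(t),I_h\rangle_X\to\langle x(t),B^*x(t)\rangle_X$ is precisely what cannot be justified. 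The paper's missing ingredient is to regularize the operator rather than the time variable: it sets $B_\lambda:=\lambda R(\lambda,A_{-1})B\in\mathcal{L}(X)$, for which the energy inequality (\ref{dx}) and the three-term decomposition of $\langle B_\lambda S(t)x_0,S(t)x_0\rangle_X$ are legitimate, proves $x_\lambda\to x$ in $L^p(0,T;X)$ using the admissibility estimates and dominated convergence, and lets $\lambda\to\infty$ only in the time-integrated inequality, so that the limiting estimate (\ref{13}) involves only well-defined quantities ($\|x_0\|_X^2$, $\|x(T)\|_X^2$, $\|S(T)x_0\|_X^2$ and the integral appearing in (\ref{obs})). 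You would need to insert this approximation layer (or an equivalent one) for your argument to go through; the rest of your plan then matches the paper.
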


   \begin{proof}
Foe any $\rho>0,$ we set $A_{\rho B}:=(A_{-1} -\rho B)|_X$. According to assumption $(i),$  we deduce from Theorem $1$ that
the system (\ref{S1}) admits a unique mild solution which is given,  for $x_0\in D(A_{\rho B}), $  by the variation of parameters formula (see \cite{Desch}):
     \begin{equation}\label{f}
       x(t)=S(t)x_0-\rho\displaystyle\int_{0}^{t}S_{-1}(t-s)Bx(s)ds,~~  \forall t\geq 0.
      \end{equation}
  For $\lambda\in \rho(A)$ $(\rho(A)$  is the resolvent set of $A),$ we consider the system (\ref{S1}) with  $B_{\lambda}:=\lambda R(\lambda,A_{-1})B$ instead of $B.$  Observing that the operator $B_{\lambda}$ is bounded, we deduce that   the corresponding system admits a unique mild solution denoted  by $x_{\lambda},$ which  satisfies the following formula
  \begin{equation}\label{VPF}
  	x_{\lambda}(t)=S(t)x_0-\rho\displaystyle\int_{0}^{t}S_{-1}(t-s)B_{\lambda}x_{\lambda}(s)ds,~~  \forall t\geq 0.		
  \end{equation}
 We claim that $x_{\lambda}(t)$ converges to $x(t)$ as $\lambda\to +\infty.$ Indeed, for all $t>0$, we have
  \begin{eqnarray*}
  	x_{\lambda} (t)-x(t)&=&\rho \int_{0}^{t}S_{-1}(t-s)B_{\lambda}x_{\lambda}(s)-\rho \int_{0}^{t}S_{-1}Bx(s)ds\\&=&\rho \int_{0}^{t}S_{-1}(t-s)B_{\lambda}(x_{\lambda}(s)-x(s))ds+\rho \int_{0}^{t}S_{-1}(t-s)B_{\lambda}x(s)ds-\rho \int_{0}^{t}S_{-1}(t-s)Bx(s)ds.	 	
  \end{eqnarray*}
 Then, using (\ref{adT}), this yields for all $t\in[0,T]$
  \begin{eqnarray*}
  	\|x_{\lambda} (t)-x(t)\|_X&\leq&\rho M\|x_{\lambda}(.)-x(.)\|_{L^p(0,t;X)}+\rho\left\| \int_{0}^{t}S_{-1}(t-s)B_{\lambda}x(s)ds- \int_{0}^{t}S_{-1}(t-s)Bx(s)ds\right\|_X,	
  \end{eqnarray*}
  which by integrating gives for all $t\in [0,T]$
  	$$
  \|x_{\lambda} (t)-x(t)\|^p_{L^p(0,T;X)}\leq T(2\rho M)^p\|x_{\lambda}(.)-x(.)\|^p_{L^p(0,T;X)}+
  $$
  $$(2\rho)^p\left\| \int_{0}^{t}S_{-1}(t-s)B_{\lambda}x(s)ds-
     \int_{0}^{t}S_{-1}(t-s)Bx(s)ds\right\|^p_X.	
  $$
  It follows that
  \begin{eqnarray*}
  	\|x_{\lambda} (.)-x(.)\|^p_{L^p(0,T;X)}\leq \rho C_{\rho}\int_{0}^{T}\left\| \int_{0}^{t}S_{-1}(t-s)B_{\lambda}x(s)ds- \int_{0}^{t}S_{-1}(t-s)Bx(s)ds\right\|^p_Xdt,	
  \end{eqnarray*}
  with $C_{\rho}:=\frac{(2\rho)^p}{1-T(2\rho M)^p}.$\\
  It is clear that $\lim_{\lambda\to\infty}  \int_{0}^{t}S_{-1}(t-s)B_{\lambda}x(s)ds= \int_{0}^{t}S_{-1}(t-s)Bx(s)ds$ in $X$ and we have
  $$
  	\left\| \int_{0}^{t}S_{-1}(t-s)B_{\lambda}x(s)ds- \int_{0}^{t}S_{-1}(t-s)Bx(s)ds\right\|_X \leq \left\| \int_{0}^{t}S_{-1}(t-s)B_{\lambda}x(s)ds\right\|_	X+
  $$
  $$\left\|\int_{0}^{t}S_{-1}(t-s)Bx(s)ds\right\|_X.
  $$		
 Moreover,  by the admissibility assumption we get
  $$\left\| \int_{0}^{t}S_{-1}(t-s)B_{\lambda}x(s)ds- \int_{0}^{t}S_{-1}(t-s)Bx(s)ds\right\|_X\leq 2M\|x(.)\|_{L^p(0,t;X)}.$$ Then, according to the dominated convergence theorem we have
  \begin{eqnarray*}
  	\lim_{\lambda\to\infty}	\|x_{\lambda} (.)-x(.)\|^p_{L^p(0,T;X)}&\leq&\lim_{\lambda\to\infty}\int_{0}^{T}\left\|\rho \int_{0}^{t}S_{-1}(t-s)B_{\lambda}x(s)ds-\rho \int_{0}^{t}S_{-1}(t-s)Bx(s)ds\right\|_Xdt\\&=&\int_{0}^{T}\lim_{\lambda\to\infty}\left\|\rho \int_{0}^{t}S_{-1}(t-s)B_{\lambda}x(s)ds-\rho \int_{0}^{t}S_{-1}(t-s)Bx(s)ds\right\|_Xdt\\&=&0.
  \end{eqnarray*}
Let  $x_0\in D(A_{\rho B})$ be fixed. Thus for all $t>0 $ we have
         \begin{equation}\label{dx}
        \frac{d}{dt}\Vert x_{\lambda}(t)\Vert_X^2\leq -2\rho \mathcal{R}e \langle B_{\lambda}x_{\lambda}(t),x_{\lambda}(t)\rangle_X,\;\; \forall t>0\cdot
        \end{equation}
          For all $t> 0,$ we have  the following equality
\begin{eqnarray*}
	\left\langle B_{\lambda}S(t)x_0,S(t)x_0\right\rangle_X &=&\left\langle B_{\lambda}S(t)x_0,S(t)x_0-x_{\lambda}(t)\right\rangle_{X}\\&+&\left\langle B_{\lambda}S(t)x_0-B_{\lambda}x_{\lambda}(t),x_{\lambda}(t)\right\rangle_{X}+ \left\langle B_{\lambda}x_{\lambda}(t),x_{\lambda}(t)\right\rangle_{X}
\end{eqnarray*}
which gives
\begin{eqnarray*}
	\left\langle S(t)x_0,B^*\lambda R^*(\lambda,A)S(t)x_0\right\rangle_X &=&\left\langle S(t)x_0,B^*\lambda R^*(\lambda,A)\left( S(t)x_0-x_{\lambda}(t)\right) \right\rangle_{X}\\&+&\left\langle S(t)x_0-x_{\lambda}(t),B^*\lambda R^*(\lambda,A)x_{\lambda}(t)\right\rangle_{X}+ \left\langle B_{\lambda}x_{\lambda}(t),x_{\lambda}(t)\right\rangle_{X}
\end{eqnarray*}
Let us estimate each term of this last expression.\\
We deduce from $(i)$  that   for some  constant $M>0$ and  for all $ u\in L^p(0,T;X)$,  we have
 \begin{equation}\label{S-1}
  \left\|\displaystyle \int_0^T S_{-1} (T-s) Bu(s)  ds\right\|_X \le M\|u\|_{L^p(0,T; X)}
    \end{equation}
The formula (\ref{VPF}) combined  with the estimate (\ref{adt}), gives
\begin{equation*}
	\|S(t)x_0-x_{\lambda}(t)\|_X  =\rho \left\|\int_0^t S_{-1}(t-s) B x_{\lambda}(s) ds \right\|_X \leq\rho M \|x_{\lambda}(.)\|_{L^p(0,T;X)}, \forall t\in [0,T].
\end{equation*}
Then, according to Lemma \ref{lem1}, we conclude that
\begin{equation}\label{S}
	\|S(t)x_0-x_{\lambda}(t)\|_X \le \frac{\rho M T^{\frac{1}{p}}}{1-\rho T^{\frac{1}{p}} M} \| x_0\|_X	
\end{equation}
 For every $ t>0,$ we have
\begin{eqnarray*}
\mathcal{R}e \left\langle S(t)x_0,B^*\lambda R^*(\lambda,A)S(t)x_0\right\rangle_{X} &\leq& C\|S(t)x_0\|_X\|B^*  \|_{{\cal L} (X_{-1},X) }\|\lambda R^*(\lambda,A)\|_{\mathcal{L}(X)} \|S(t)x_0-x_{\lambda}(t)\|_{X}\\&+& C\|B^*  \|_{{\cal L} (X_{-1},X) }\|\lambda R^*(\lambda,A)\|_{\mathcal{L}(X)}	\|x(t)\|_{X} \| S(t)x_0-x(t)\|_{X}\\&+& \mathcal{R}e \left\langle B_{\lambda}x_{\lambda}(t),x_{\lambda}(t)\right\rangle_{X},
\end{eqnarray*}
where $C$ is a positive constant.\\
Using the fact that $S(t)$ is a contraction, it comes
\begin{eqnarray*}
\mathcal{R}e \left\langle S(t)x_0,B^*\lambda R^*(\lambda,A)S(t)x_0\right\rangle_{X} &\leq& C\|B^*  \|_{{\cal L} (X_{-1},X) }\|x_0\|_X \|S(t)x_0-x_{\lambda}(t)\|_{X}\\&+& C\|B^*  \|_{{\cal L} (X_{-1},X) }\|x(t)\|_{X} \| S(t)x_0-x(t)\|_{X}+ \mathcal{R}e \left\langle B_{\lambda}x_{\lambda}(t),x_{\lambda}(t)\right\rangle_{X}\cdot
\end{eqnarray*}
Using (\ref{x(t)_X}) and (\ref{S}), we deduce that for all $t\in (0,T]$ we have
\begin{eqnarray*}
\mathcal{R}e \left\langle S(t)x_0,B^*\lambda R^*(\lambda,A)S(t)x_0\right\rangle_{X} &\leq& \frac{\rho CL M T^{\frac{1}{p}}}{1-\rho T^{\frac{1}{p}} M} \| x_0\|^2_X \\&+& \frac{\rho CL MT^{\frac{1}{p}}}{1-\rho T^{\frac{1}{p}} M}  \left(1+ \frac{\rho M T^{\frac{1}{p}}}{1-T^{\frac{1}{p}}\rho M} \right) \|x_0\|_X^2+ \mathcal{R}e \left\langle B_{\lambda}x_{\lambda}(t),x_{\lambda}(t)\right\rangle_{X},
\end{eqnarray*}
with $L:= \|B^*\|_{{\cal L} (X_{-1},X)}$.  Then, integrating the last inequality  and using (\ref{dx}) we get for all $t\in [0,t]$

\begin{eqnarray*}
	2\rho\int_{0}^{T}\mathcal{R}e \left\langle S(t)x_0,B^*\lambda R^*(\lambda,A)S(t)x_0\right\rangle_{X} &\leq& \frac{2\rho^2 CL M T^{\frac{1}{p}}}{1-\rho T^{\frac{1}{p}} M} \| x_0\|^2_X \\&+& \frac{2\rho^2 CL MT^{\frac{1}{p}}}{1-\rho T^{\frac{1}{p}} M}  \left(1+ \frac{\rho M T^{\frac{1}{p}}}{1-T^{\frac{1}{p}}\rho M} \right) \|x_0\|_X^2+ \|x_0\|^2_X-\|x_{\lambda}(T)\|_X^2,
\end{eqnarray*}

Thus, letting $\lambda\to +\infty,$ we drive
 \begin{eqnarray*}
 2\mathcal{R}e\int_{0}^{T} Re \left\langle S(t)x_0,B^*S(t)x_0\right\rangle_{X} dt&\leq& 2\rho^2 C_1\|x_0\|^2_{X}+\|x_0\|^2_X-\|x(T)\|_X^2
 \end{eqnarray*}
 with $C_1=\frac{ MCL T^{1+\frac{1}{p}}}{1-\rho T^{\frac{1}{p}} M} 	\left(2+\frac{\rho MT^{\frac{1}{p}}}{1-\rho T^{\frac{1}{p}}M} \right).$\\
 Applying the inequality (\ref{obs}), it follows that
   \begin{equation}\label{13}
   2\rho\delta\|S(T)x_0\|_X^2-2\rho^2 C_1\|x_0\|^2_{X} \leq\|x_0\|^2_X-\|x(T)\|_X^2
  \end{equation}
Using Lemma \ref{lem1},  we deduce via  the  variation of constants formula (\ref{f}) that for all $t\in [T,2T],$ we have
$$ \begin{array}{lll}
	\|x(t)\|_X &\le   \|S(T)x_0\|_X+ \rho \|\int_0^t S_{-1}(t-s) B x(s) ds\|_X& \\
\\
	& \le \|S(T)x_0\|_X+ \rho M_\rho \|x_0\|_X.&
\end{array}
$$
By reiterating the processes for $t\in[kT,(k+1)T], k\geq1, $ we deduce that
$$ 	\|x(t)\|_X \le   \|S(T)x(kT)\|_X+ \rho M_\rho \|x(kT)\|_X.$$
Then for all $k\geq1, $ we have
\begin{equation}\label{zS(T)}
	\|x((k+1)T)\|_X^2
	\le  2\|S(T)x(kT)\|_X^2 + 2 \rho M_\rho  \|x(kT)\|^2_X.
\end{equation}
Moreover, (\ref{13}) becomes  \begin{equation}\label{kT}
	2\rho\delta\|S(T)(kT)\|_X^2-2\rho^2 C_1\|x(kT)\|^2_{X} \leq\|x(kT)\|^2_X-\|x((k+1)T)\|_X^2
\end{equation}
  This together with   (\ref{zS(T)}) implies
$$
\rho \delta\bigg ( \| x((k+1)T)\|_X^2 - 2 \rho M_\rho \|x(kT)\|^2_X \bigg ) - 2C_1\rho^2 \|x(kT)\|^2_X\le
$$
$$
\| x(kT)\|_X^2 -\| x((k+1)T)\|_X^2.
$$
Hence
$$
(1+\rho \delta)   \| x((k+1)T)\|_X^2 \le  \bigg (2\delta \rho^2M_\rho +2C_1\rho^2  + 1 \bigg ) \| x(kT)\|_X^2 , \; k\ge 0\cdot
$$
This implies
   \begin{equation*}\label{11}
     \| x\left( (k+1)T\right) \|^2_{X}\leq C_2\| x(kT)\|^2_{X}
     \end{equation*}
 where $C_2=\frac{2 \rho^2 ( \delta M_\rho+C_1) + 1}{1+\rho \delta},$ which is in $(0,1)$ for $\rho\to 0^+$.\\
Since $\|x(t)\|_X$  decreases, we get for $k=E\left(  \frac{t}{T}\right)$  (where $E(.)$ is the integer part function).\\
          $$\|x(t)\|^2_{X}\leq (C_2)^k\|x_0\|^2_{X},$$
       which gives the following exponential decay
       \begin{equation*}\label{exx}
         \|x(t)\|_{X}\leq K e^{-\sigma t} \|x_0\|,\; \forall t\ge 0\cdot
       \end{equation*}
     where $K=(C_2)^{-\frac{1}{2}}$ and $\sigma=\frac{-\mbox{ln}(C_2)}{2T}$. This estimate extends by density to all $x_0\in X.$
 Hence the uniform exponential stability hold for any $0<\rho <\rho_1$, where $\rho_1$ is such that $0<\rho_1 < \frac{1}{T^{\frac{1}{p}}M}$ and  $\frac{2 \rho^2 ( \delta M_\rho+C_1) + 1}{1+\rho \delta}\in (0,1).$
       \end{proof}
{\bf{Remark 1}}\\
 In the case where $Range(BS(t))\subset X,\;\forall t>0$,    the condition (\ref{obs}) is equivalent to the conventional one (see \cite{Ammari,Ouz17}):
 \begin{equation}\label{oobs}
	\displaystyle\int_{0}^{T}Re\left\langle BS(t)x,S(t)x\right\rangle_{X}dt\geq \delta\|S(T)x\|^2_{X},\;\; \forall x\in X.
	\end{equation}
Moreover, the condition (\ref{oobs}) can be weakened if an appropriate decomposition  of  $Range(B)$ is available. This is  the aim of the next section.
\subsection{A range decomposition method}
   Let  $\mathfrak{X}\oplus\mathfrak{X}_{-1}$ be a direct  sum in $X_{-1}$, where  $\mathfrak{X}=i(X) \; (i$  being the canonical injection of $X$ in $X_{-1}),$ so we can write $\mathfrak{X}=X.$ Then for any $C\in\mathcal{L}(X,X_{-1})$ such that $rg(C)\subset\mathfrak{X}\oplus\mathfrak{X}_{-1},$ we set $_XC=:P_{\mathfrak{X}}C,$ where $P_{\mathfrak{X}}$ is the projection of $\mathfrak{X}$  according to  $\mathfrak{X}\oplus\mathfrak{X}_{-1}$. Now, given a pair of operators $(K,L)\in \mathcal{L}(X,X_{-1})\times\mathcal{L}(X,X_{-1})$, the decomposition $\mathfrak{X}\oplus\mathfrak{X}_{-1}$ is said to be admissible for $(K,L)$ if the three following properties hold:\\
  (a) $rg(K)\subset\mathfrak{X}\oplus\mathfrak{X}_{-1}$  and $rg(L)\subset\mathfrak{X}\oplus\mathfrak{X}_{-1}$,\\
  (b) $_XK$ is dissipative on $D((K+L)|_X):=\left\lbrace x\in X : Kx+Lx\in X\right\rbrace, $\\
   (c)  $_XL\in\mathcal{L}(X).$\\
For our stabilization problem,  we will be interested  with admissible decompositions for the pairs $(A_{-1},-\rho B)$ with $ \rho>0$ small enough.
 Note that if the domain of the operator  $(A_{\rho B})|_X$ is independent of $ \rho>0$ (small enough), which is equivalent to $D((A_{\rho B})|_X)= D(A)\cap D(B|_X),$ then for the sum $\mathfrak{X}\oplus\mathfrak{X}_{-1}$ to be admissible  for the pairs $(A_{-1},-\rho B),\; \rho>0,$ it suffices to be  admissible  for the pair $(A_{-1},B).$\\

We are ready to state our second main result.

    \begin{theorem}\label{T1}
  	  Let $A$ be the infinitesimal generator of a linear $C_0-$semigroup of contractions $S(t)$ on $X$ and let $B\in \mathcal{L}(X,X_{-1})$. Let $\mathfrak{X}\oplus\mathfrak{X}_{-1}$ be an admissible decomposition for the pair $(A_{-1},-\rho B)$ for any $\rho>0$ small enough,  and assume that for some $T>0$, the operator   $B$ is $p-$admissible  for some $1<p<\infty$  and satisfies the estimate:
  \begin{equation}\label{obs2}
            \displaystyle\int_{0}^{T}Re\left\langle _XBS(t)x,S(t)x\right\rangle_{X}dt\geq \delta\|S(T)x\|^2_{X},\;\; \forall x\in X,
             \end{equation}
             for some $T, \delta>0.$\\
               Then  there is a $\rho_1>0$ such that
   	the system (\ref{S1}) is  exponentially stable on $X$ for all $\rho\in(0,\rho_{1})$.
   \end{theorem}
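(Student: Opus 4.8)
The plan is to reproduce the architecture of the proof of Theorem \ref{T2}, but to replace the Yosida-type regularisation $B_\lambda=\lambda R(\lambda,A_{-1})B$ by the operator ${}_XB=P_{\mathfrak{X}}B$, whose boundedness on $X$ is precisely property (c) of the admissible decomposition. The $p$-admissibility of $B$ is retained so that Theorem \ref{ThD} and Lemma \ref{lem1} still apply to the full, unbounded operator $B$. Concretely, I would first fix $x_0\in D(A_{\rho B})$, where $A_{\rho B}=(A_{-1}-\rho B)|_X$, invoke Theorem \ref{ThD} to obtain the unique classical solution $x(t)=T(t)x_0$, which is continuously differentiable in $X$ with $\dot{x}(t)=A_{-1}x(t)-\rho Bx(t)\in X$, and record the a priori bounds of Lemma \ref{lem1}.

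The decisive step --- and the place where the decomposition buys something over Theorem \ref{T2} --- is the energy inequality. Since $\dot{x}(t)\in X=\mathfrak{X}$ and $P_{\mathfrak{X}}$ is linear, I would write
$$
\dot{x}(t)=P_{\mathfrak{X}}\big(A_{-1}x(t)-\rho Bx(t)\big)={}_XA_{-1}\,x(t)-\rho\,{}_XB\,x(t),
$$
both summands now lying in $X$. Pairing with $x(t)$ and using the dissipativity of ${}_XA_{-1}$ on $D(A_{\rho B})$ (property (b)) gives
$$
\frac{d}{dt}\|x(t)\|_X^2=2\,\mathrm{Re}\langle{}_XA_{-1}x(t),x(t)\rangle_X-2\rho\,\mathrm{Re}\langle{}_XBx(t),x(t)\rangle_X\le-2\rho\,\mathrm{Re}\langle{}_XBx(t),x(t)\rangle_X.
$$
Because ${}_XB\in\mathcal{L}(X)$ the last inner product is genuinely defined, so no approximation $B_\lambda$ and no passage to the limit $\lambda\to\infty$ is needed here. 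Integrating over $[0,T]$ produces $\|x(T)\|_X^2-\|x_0\|_X^2\le-2\rho\int_0^T\mathrm{Re}\langle{}_XBx(t),x(t)\rangle_X\,dt$.

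Next I would transfer this onto the free evolution $S(t)x_0$ so as to invoke (\ref{obs2}). Setting $e(t):=x(t)-S(t)x_0=-\rho\int_0^t S_{-1}(t-s)Bx(s)\,ds$, Lemma \ref{lem1} gives $\|e(t)\|_X\le\frac{\rho MT^{1/p}}{1-\rho T^{1/p}M}\|x_0\|_X$, a term of order $\rho$. Expanding $\langle{}_XBx(t),x(t)\rangle$ around $\langle{}_XBS(t)x_0,S(t)x_0\rangle$ and controlling the cross terms by $\|{}_XB\|_{\mathcal{L}(X)}$, the bound on $e(t)$ and the a priori bound (\ref{x(t)_X}) on $\|x(t)\|_X$, the errors are of order $\rho\|x_0\|_X^2$; with (\ref{obs2}) this yields $\int_0^T\mathrm{Re}\langle{}_XBx(t),x(t)\rangle_X\,dt\ge\delta\|S(T)x_0\|_X^2-C\rho\|x_0\|_X^2$, hence $\|x(T)\|_X^2\le\|x_0\|_X^2-2\rho\delta\|S(T)x_0\|_X^2+C'\rho^2\|x_0\|_X^2$.

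From here the reasoning is identical to the end of Theorem \ref{T2}: writing $x(T)=S(T)x_0+e(T)$ with $\|e(T)\|_X$ of order $\rho\|x_0\|_X$ lets me replace $\|S(T)x_0\|_X^2$ by a multiple of $\|x(T)\|_X^2$ up to an $O(\rho^2)$ term, which gives $\|x(T)\|_X^2\le C_2\|x_0\|_X^2$ with $C_2<1$ for all $\rho$ small enough. Iterating on the intervals $[kT,(k+1)T]$ through the semigroup property of $T(t)$ produces $\|x(kT)\|_X^2\le C_2^{\,k}\|x_0\|_X^2$, whence the exponential decay $\|x(t)\|_X\le Ke^{-\sigma t}\|x_0\|_X$, and the density of $D(A_{\rho B})$ in $X$ extends it to every $x_0\in X$. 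I expect the main obstacle to be the rigorous justification of the decomposition identity $\dot{x}={}_XA_{-1}x-\rho\,{}_XBx$ together with the legitimacy of applying the dissipativity of ${}_XA_{-1}$ to the classical solution; this is exactly where properties (a)--(c) of the admissible decomposition are used and where the simplification over Theorem \ref{T2}, namely the removal of the limiting argument in $\lambda$, is realised. The remaining estimates are routine and parallel to those of Theorem \ref{T2}.
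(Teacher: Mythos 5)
Your proposal is correct and follows essentially the same route as the paper's own proof: the pointwise decomposition $\dot{x}(t)={}_XA_{-1}x(t)-\rho\,{}_XBx(t)$ via property (a), the energy inequality from the dissipativity of ${}_XA_{-1}$ (property (b)) and the boundedness of ${}_XB$ (property (c)), the comparison with $S(t)x_0$ through Lemma \ref{lem1} to invoke (\ref{obs2}), and the iteration over $[kT,(k+1)T]$ yielding $\|x((k+1)T)\|_X^2\le C_2\|x(kT)\|_X^2$ with $C_2\in(0,1)$ for small $\rho$. You also correctly identify that the admissible decomposition removes the need for the Yosida-type regularisation $B_\lambda$ used in Theorem \ref{T2}, which is exactly the simplification realised in the paper.
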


   \begin{proof}
 Let $0<\rho < \frac{1}{T^{\frac{1}{p}}M}$, and let $x(t)$ be the unique mild solution of 	the system (\ref{S1}) given for  $x_0\in D((A_{\rho B})|_X)$  by the  formula (\ref{f}). \\
 The admissibility assumption on $ B $  together with Lemma \ref{lem1} implies the following estimate for $t\in [0,T]:$
    \begin{equation}\label{x(t)-S(t)}
  	\|x(t)-S(t)x_0\|_X \le \frac{\rho M T^{\frac{1}{p}}}{1-\rho T^{\frac{1}{p}} M} \| x_0\|_X	
   \end{equation}
 Moreover, observing that $A_{\rho B}x(t)=_X\!\!(A_{\rho B})x(t)$,  we can write
   	\begin{eqnarray*}\label{dxx}
   		\frac{d}{dt}\Vert x(t)\Vert_X^2 = 2\mathcal{R}e\langle_X(A_{-1})x(t)-\rho \;_XBx(t),x(t)\rangle_X,\; \forall t>0\cdot	
   	\end{eqnarray*}
   	Integrating this last equality and using the dissipativeness of $_X(A_{-1})$ gives
   	\begin{equation}\label{inner}
   		2\rho \int_s^t \mathcal{R}e  \langle _XBx(\tau),  x(\tau) \rangle_X d\tau \le \|x(s)\|_X^2-\|x(t)\|_X^2,\; t\ge s\ge 0.
   	\end{equation}
   We have the following equality
   	\begin{eqnarray*}
   		\left\langle _XBS(t)x_0,S(t)x_0\right\rangle_X &=&\left\langle _XBS(t)x_0-\;      _XBx(t),S(t)x_0\right\rangle_{X}\\&+&\left\langle _XBx(t),S(t)x_0-x(t)\right\rangle_{X}+ \left\langle _XBx(t),x(t)\right\rangle_{X}\cdot
   	\end{eqnarray*}
 Then using the fact that the operator $_XB$ is bounded, it comes
   	\begin{eqnarray*}
   		\mathcal{R}e \left\langle _XBS(t)x_0,S(t)x_0\right\rangle_{X} &\leq&  \|_XB\|_{\mathcal{L}(X)}\|x_0\|_{X}\|S(t)x_0-x(t)\|_{X}\\&+&  \|_XB\|_{\mathcal{L}(X)}	 \|x(t)\|_{X}\|S(t)x_0-x(t)\|_{X}+\mathcal{R}e \left\langle _XBx(t),x(t)\right\rangle_{X}
   	\end{eqnarray*}
	The estimate (\ref{x(t)-S(t)}) combined with  (\ref{x(t)_X}), implies 	
   	\begin{eqnarray*}
   	\mathcal{R}e \left\langle _XBS(t)x_0,S(t)x_0\right\rangle_{X} &\leq&  \|_XB\|_{\mathcal{L}(X)}\frac{\rho M T^{\frac{1}{p}}}{1-T^{\frac{1}{p}}\rho M}\|x_0\|_{X}^2\\&+&  \|_XB\|_{\mathcal{L}(X)}	\frac{\rho M T^{\frac{1}{p}}}{1-T^{\frac{1}{p}}\rho M}\left( 1+ \frac{\rho M T^{\frac{1}{p}}}{1-T^{\frac{1}{p}}\rho M} \right)\|x_0\|_X^2\\&+&\mathcal{R}e \left\langle _XBx(t),x(t)\right\rangle_{X},\; \forall t\in [0,T]\cdot
   	\end{eqnarray*}
   Integrating this inequality and	using the inequality (\ref{obs2}),  we deduce that
      $$	\delta\|S(T)x(kT)\|_X^2-\rho C_1\|x(kT)\|^2_{X} \leq\displaystyle\int_{kT}^{(k+1)T}\mathcal{R}e \left\langle _XBx(s),x(s)\right\rangle_{X} ds
      $$
      with $C_1= \frac{ M T^{1+\frac{1}{p}}}{1-\rho T^{\frac{1}{p}} M} 	 \|_XB\|_{\mathcal{L}(X)} \left(2+\frac{\rho MT^{\frac{1}{p}}}{1-\rho T^{\frac{1}{p}}M} \right).$\\
By using Lemma \ref{lem1}, we derive
 $$
   	\rho \delta\bigg ( \| x((k+1)T)\|_X^2 - 2 \rho M_\rho \|x(kT)\|^2_X \bigg ) - 2C_1\rho^2 \|x(kT)\|^2_X\le
   	$$
   	$$
   	\| x(kT)\|_X^2 -\| x((k+1)T)\|_X^2,
   	$$
   	or equivalently
   $$		
   \| x\left( (k+1)T\right) \|^2_{X}\leq C_2\| x(kT)\|^2_{X},
   $$
   	where $C_2=\frac{2 \rho^2 ( \delta M_\rho+C_1) + 1}{1+\rho \delta},$ which lies in $(0,1)$ for $\rho\to 0^+$.\\
 Hence, using  the decreasing   	of  $\|x(t)\|_X$,  we deduce  the following exponential decay
   $$		\|x(t)\|_{X}\leq K e^{-\sigma t} \|x_0\|,\; \forall t\ge 0,
   $$
   	where $K=(C_2)^{-\frac{1}{2}}$ and $\sigma=\frac{-\mbox{ln}(C_2)}{2T}$. This estimate extends by density to all $x_0\in X.$ Thus, taking $\rho_1>0$ such that $0<\rho_1 < \frac{1}{T^{\frac{1}{p}}M}$ and  $C_2\in (0,1)$, we get the result of the theorem.
   \end{proof}
   \section{Examples}
   \begin{example}
 Let $\Omega $ be an open and bounded subset of $\mathbf{R}^d,\; d\ge 1$, and let us consider the  following bilinear equation of diffusion type
   	\begin{eqnarray}\label{ES1}
   		\begin{cases}
   			\frac{\partial }{\partial t}  x = \Delta x+gx + \nu (t)
   			(-\Delta)^{\frac{1}{2}}x~~ &\mbox{in}~~\Omega\times(0,\infty),\\
   			x(t)=0~~&\mbox{on}~~\partial \Omega\times (0,\infty),\\
   			x(0)=x_0~~&\mbox{in}~~\Omega.
   		\end{cases}
   	\end{eqnarray}
where $g\in L^\infty(\Omega),\; \nu$ is a real valued bilinear control and $x(t)=x(\zeta,t)\in L^2(\Omega)$ is the state.
 The system (\ref{ES1})  is an example of fractional equation of diffusion equations type, and may describe  transport processes in complex systems which are slower than the Brownian diffusion. As practical situations  displaying such
anomalous behaviour, let us mention the charge carrier transport in amorphous
semiconductors, the nuclear magnetic resonance diffusometry in percolative and
porous media etc (see \cite{Berra09,Kilbas,Mragh14,Metzler}). Here, we aim to show the exponential stabilization of (\ref{ES1}). Let us observe that  system (\ref{ES1}) can be written in   the form of (\ref{S1}) if we close it by  the switching feedback control $\nu(t)=-\rho {\bf 1}_{\{t\ge 0\:/\: x(t)\ne0\}}$. This is because we have ${\bf 1}_{\{t\ge 0\:/\: x(t)\ne0\}}  (-\Delta)^{\frac{1}{2}}x(t)= (-\Delta)^{\frac{1}{2}}x(t),\; \forall t\ge 0.$. Let us  take the state space $X=L^2(\Omega)$ (endowed with its  natural scalar product $\langle\cdot,\cdot\rangle_X$), and consider the control operator $B= (-\Delta)^{\frac{1}{2}}$ and the system's operator $A=\Delta+ g I $  with $D(A)=H^2(\Omega)\cap H^1_0(\Omega)$. The  operator $A$ generates an analytic semigroup $S(t)$ on $X$ (see \cite{eng}, p. 107 and p. 176) which is given by the following variation of constants formula:
	$$S(t)x=S_0(t)x+\int_0^t S_0(t-s)g(\xi) S(s)x ds,\;\; t\ge 0,$$
where $S_0(t)$ is the semigroup generated by $A$ with  $g=0$.\\
Let us verify the assumptions of Theorem $2$. In order to make the computation easier,  we restrict our self to the mono-dimension case, thus we consider $\Omega =(0,1)$. In this case the semigroup $(S_0(t))$ is is given by
$$
S_0(t)x=\sum_{j\geq1} e^{-\alpha_jt}  \langle x,\phi_j \rangle_X \; \phi_j, \;  \forall x\in L^2(\Omega)$$
   	with $\alpha_j=j^2\pi^2, j\geq1$ is the set of eigenvalues of $-\Delta$ with  the corresponding orthonormal basis  of $L^2(\Omega)$:  $\phi_j(x)=\sqrt{2}\sin(j\pi x)$.  Moreover, the semigroup $S(t)$ is a contraction if in addition
   $$\int_\Omega g(\xi) y^2(\xi) d\xi \le \|y\|_{H_0^1(\Omega)}^2,\; \forall y\in H_0^1(\Omega).$$
   Thus, in the sequel we suppose this condition satisfied.    Then the   operator $B$ can be expressed as $$Bx=\sum_{j\geq1}\alpha_j^{\frac{1}{2}}\langle x,\phi_j \rangle_X \; \phi_j,~x\in L^2(\Omega). $$
   	Here,  $B$ is unbounded on $L^2(\Omega)$  and it is bounded from  $L^2(\Omega)$ onto the space $X_{-1}$ defined as the completion of $L^2(\Omega)$ for the norm $\|y\|=\big (\displaystyle \sum_{j\ge 1} \frac{1}{\alpha_j} \langle y,\phi_j\rangle^2 \big )^{\frac{1}{2}}, \; \forall y\in L^2(\Omega)$,   	which can be also interpreted as the dual space of $D((-\Delta)^{\frac{1}{2}})$ with respect to the $L^2(\Omega)-$topology (the  space $L^2(\Omega)$ being the pivot space).      Note also that the space $D((-\Delta)^{\frac{1}{2}})$ can be doted with the norm  $\|x\|_{D((-\Delta)^{\frac{1}{2}})}=\bigg ( \displaystyle \sum_{j\ge 1} \alpha_j|\langle x,\phi_j\rangle_X|^2 \bigg )^{\frac{1}{2}}$.\\
     Let $p>1,\, T>0$ and let $u\in L^p(0,T;X).$ It follows from the fact that 
     $(-\Delta)^{\frac{1}{2}}\in\mathcal{L}(X,X_{-1}), $  that  the $X_{-1}-$ valued integral $\int_{0}^{T}S_{-1}(T-s)(-\Delta)^{\frac{1}{2}}u(s)ds$ is well-defined (see \cite{eng}, Theorem 5.34). Moreover, since the semigroup  $(S(t))_{t\geq0}$ is  analytic, then so is $((S_{-1}(t))_{t\geq0}$. This implies that  $S_{-1}(\frac{T-s}{2})(-\Delta)^{\frac{1}{2}}u(s)\in X, \; \forall s\in [0,T)$ (see \cite{eng}, p. 101).      Then  we have 	
     $\displaystyle\int_{0}^{T}S_{-1}(T-s)(-\Delta)^{\frac{1}{2}}u(s)ds\in X,$ which gives the $p-$admissibility of $B$ (see \cite{Nagel}, Prop. 3.3 and \cite{van}, Lemma. 4.3.9).\\
     For all $x\in L^2(\Omega),\; t\ge 0 $ and $j\ge 1$, we have
 \begin{eqnarray*}
|\langle  \int_{0}^t S_0(t-s)g S(s) x ds,\phi_j \rangle_X| &=&
 \int_0^t \langle S_0(t-s)g S(s) x ,\phi_j \rangle ds=
 |\int_0^t \langle g S(s) x, e^{-\alpha_j (t-s)}\phi_j \rangle_X ds|\\
&\le& \|g\|_{L^\infty (\Omega)} \| x\|_X \; \frac{1-e^{-\alpha_j t}}{\alpha_j}.
\end{eqnarray*}
We deduce that
\begin{equation}\label{Sg}
|\langle S(t)x,\phi_j \rangle_X|\le e^{-\alpha_j t} \| x\|_X + \|g\|_{L^\infty (\Omega)}  \; \frac{1-e^{-\alpha_j t}}{\alpha_j} \; \| x\|_X,\; t\ge 0,\; j\ge 1.
\end{equation}
Now for any $x\in X$, we have
$$BS(t)x=\sum_{j\geq1}  \alpha_j^{\frac{1}{2}} \left\langle S(t)x ,\phi_j\right\rangle_X \phi_j.$$
This combined with (\ref{Sg}) implies that $BS(t)x\in X$ for all $x\in X$ and for any $t>0$. \\
Now, using the series expansion of  $BS(t)x $ for  $x\in X$, we get
\begin{eqnarray*}
\left\langle BS(t)x,S(t)x\right\rangle_X  &=& \sum_{j\geq1}\alpha_j^{\frac{1}{2}}   \left\langle S(t)x,\phi_j\right\rangle_X^2  \\&\geq& \|S(t)x\|_X^2 \\&\geq& \|S(T)x\|^2_X,\; \forall t\in[0,T].
\end{eqnarray*}
It follows that the assumption (\ref{obs}) is fulfilled.\\
We conclude by Theorem \ref{T2} that for $\rho>0$ small enough,  the control $\nu(t)=-\rho {\bf 1}_{\{t\ge 0,\:\: x(t)\ne0\}}$ guarantees the uniform exponential stability of the system (\ref{ES1}).
\end{example}
\begin{example}
Consider the following system
	\begin{eqnarray*}
		(S_0)	\begin{cases}
			\frac{\partial }{\partial t} (\zeta,t)= \frac{\partial }{\partial \zeta} x(\zeta,t) - \alpha x(\zeta,t) + \nu(t) h(\zeta)x(\zeta,t)\;\;& \mbox{in}\; (0,1)\times (0,\infty),\\
			x(1,t)=0\;\;& \mbox{in}\; (0,\infty),\\
x(\cdot,0)=x_0\in L^2(0,1)
		\end{cases}
	\end{eqnarray*}
	where $X=L^2(0,1), \alpha >0$ and   $ h\in L^{\infty}(0,1)$ is such that $h\geq c>0,$ for some positive constant $c$. Here we can take $A=\frac{d}{d\zeta}-\alpha \, i\!d$ with domain $D(A):=\left\lbrace x\in H^1(0,1) : x(1)=0 \right\rbrace.$\\
The operator $A$ is the generator of  a contraction semigroup $(S(t)_{t\geq 0})$ given by
$$
\big ( S(t) x \big) (\zeta)=\left\{
\begin{array}{ll}
	e^{-\alpha t } x(\zeta+t) & \mbox{if} \; \zeta+t\le 1, \\
	\\
	0 & \mbox{else.}
\end{array}
\right.
$$
	According to previous theorems, the system $(S_0)$ is exponentially stablilizable  by  the switching feedback control $\nu(t)=-\rho {\bf 1}_{\{t\ge 0\:/\: x(t)\ne0\}}$. Indeed, here the semigroup $S(t)$ is a contraction (so that $\|S(t)\|$ is decreasing) and the linear bounded operator $B_1:= h\: i\!d$  is a bounded linear operator $(h\in L^{\infty})$ and satisfies the observation condition (since  $h\ge c>0)$. Let us now consider the following system
	\begin{eqnarray*}
		(S_1)	\begin{cases}
			\dot{x}(t)=x_{\zeta}(t)- \alpha x(\zeta,t)+\nu(t) h(\zeta)x(t)\;\;& \mbox{in}\; (0,1)\times (0,\infty)\\
			x(1,t)+\epsilon \psi( x(t)) =0\;\;& \mbox{in}\; (0,\infty)
		\end{cases}
	\end{eqnarray*}	
 where $\psi: X \rightarrow \mathbf{R}$ is a non null linear functional of $X$. This  may be seen as a perturbed version of $(S_0)$ on its boundary conditions.  According to Riesz representation, one can assume that $\psi(x)=\int_0^1 f(s) x(s) ds, \; \forall x\in X $ for some $f\in X-(0).$\\
We aim to show that under small valuers of $\epsilon>0$, this system is still exponentially stabilizable.\\
The  system $(S_1)$ can be reformulated as:	
\begin{eqnarray*}
	(S_2)	\begin{cases}
		\dot{x}(t)=\mathcal{A}x(t)+\nu(t) h(\zeta)x(t)\;\;& \mbox{in}\; (0,1)\times (0,\infty)\\
		x(0)=x_0\;\;& \mbox{in}\; (0,1)
	\end{cases}
\end{eqnarray*}	
where $\mathcal{A}: D(\mathcal{A})\subset X\to X$ is defined by:
$$\mathcal{A}x:= Ax-\epsilon hx,\;\forall x\in D(\mathcal{A}):=\left\lbrace x\in H^1(0,1), x(1)+\epsilon \psi(x)=0\right\rbrace.$$
We claim that $\mathcal{A}$ is the generator of a strongly continuous semigroup on $X.$
In order to verify this assertion, we will consider $\mathcal{A}$  as a perturbation of the generator $A$.
\\In order to write the system $(S_2)$ in the form (\ref{S1}), let us consider the function  $\theta(\zeta)= {\bf 1}(\zeta):=1,\; \zeta\in X$, which  is such that $
A_m\theta=0,$ and $ \theta(1)=1,$ where $A_m:=\frac{d}{d\zeta}$ with domain $D(A_m):=H^1(0,1)$.\\
Let us introduce the following operator
$$
B x =  h x - \psi (x) A_{-1}\theta,\;\; \forall x\in X
$$
which is a one to one  operator since we have  $\theta\not\in D(A).$\\
In the sequel, we will verify the assumptions of Theorem \ref{T1} and then conclude the stabilization of the perturbed system $(S_1)$.\\
$\bullet$ From the boundary conditions of $(S_2)$, we can see that
$$\forall x\in X,\;  x\in D(\mathcal{A}) \Leftrightarrow x\in H^1(0,1)\; \mbox{and} \;x + \epsilon \psi(x) \theta \in D(A)\cdot$$
\\
 This together with the definition of $\theta$ implies that for $x\in D(\mathcal{A}),$ we have
\begin{eqnarray*}
X \ni \mathcal{A} x &=&A_m x -\epsilon h x\\&=&A_m \big ( x +\epsilon \psi(x) \theta \big ) -\epsilon h x\\& =&A \big ( x +\epsilon \psi(x) \theta \big ) -\epsilon h x\\&=& A_{-1} \big ( x +\epsilon \psi(x) \theta \big ) -\epsilon h x\\&=&A_{-1} x -  \epsilon  B x\\&=& \big ( A_{-1}  -  \epsilon  B \big )|_X x
\end{eqnarray*}
Moreover, for all $ x\in D((A_{-1}  -  \epsilon  B)|_X),$ we have  $A_{-1} \big ( x +\epsilon \psi(x) \theta \big ) \in X$, i.e.  $ x +\epsilon \psi(x) \theta\in D(A)\subset H^1(0,1)$ which implies that $x\in H^1(0,1).$ Then we have
$( A_{-1}  -  \epsilon  B \big )|_X x =\mathcal{A} x.$ In other words,
$$\big (  \mathcal{A}, D(\mathcal{A})\big ) = \big (   ( A_{-1}  -  \epsilon  B  )|_X, D (A_{-1}  -  \epsilon  B  )|_X  ) \big ).$$
$\bullet$ The operator $(A_{-1}-\epsilon  B)_X$ is a generator if we can show that $$\int_{0}^{1}S_{-1}(1-r)\psi(u(r))A_{-1} \theta dr\in X,$$
or, equivalently
$$\int_{0}^{1}S_{-1}(1-r)\textbf{1}(.)\psi(u(r))dr\in D(A), \; \forall u\in L^2(0,1; X).
$$
We have
\begin{eqnarray*}
	\int_{0}^{1}S_{-1}(1-r)\textbf{1}(.)\psi(u(r))dr&=&\int_{0}^{1}\psi(u(r)) S(1-r)\textbf{1}({\bf \cdot})dr\\& = &\int_{\cdot}^{1} e^{-\alpha(1-r)}\psi(u(r))dr:=g(.).
\end{eqnarray*}
Since $\psi ou\in L^2(0,1),$ this implies that $g\in H^{1}(0,1)$ and $g(1)=0.$  In other words, $g\in D(A).$ Hence, for $\epsilon>0$ small enough, the system $(S_1)$ is well-posed.\\
$\bullet$  Here we can take $\mathfrak{X}_{-1}=span(A_{-1}\theta)$, so we obtain an admissible decomposition for the pair $(A_{-1},-\epsilon  B)$. Indeed, it is clear that $_{_X}\!\! Bx=h x,\; x\in X, $  so  $_{_X}\!\! B$ is  a bounded operator from $X$ to $X$.
\\
Moreover, for all $x\in D((A_{-1}-\epsilon  B)|_X) =D\big ((A_{-1}+\epsilon \psi(.) A_{-1}\theta)|_X \big ), $ we have $$
A_{-1} x=A_{-1} (x+ \epsilon \psi(x) \theta) -  \epsilon \psi(x) A_{-1}\theta= A (x+ \epsilon \psi(x) \theta)  -  \epsilon \psi(x) A_{-1} \theta,
$$
from which it comes that
$$_{_X}\!\!(A_{-1}) x=A (x+ \epsilon \psi(x) \theta), \; \forall  x\in D((A_{-1}-\epsilon  B)|_X),$$
where
$$
D((A_{-1}-\epsilon  B)|_X)=\{x\in  L^2(0,1)\: / \; x+ \epsilon \psi(x) \theta\in D(A)\}
$$
Then for $x\in D\big ( ( A_{-1}  -  \epsilon  B \big )|_X \big ), $ we have $ (A_{-1}-\epsilon  B)x \in X $ or equivalently $ x+ \epsilon \psi(x) \theta\in D(A),$ and
\begin{eqnarray*}
\langle_{_X}\!\!(A_{-1})x,x\rangle&=&\langle A(x+\epsilon \psi(x)\theta),x\rangle\\&=&\langle A_m(x+\epsilon \psi(x)\theta),x\rangle\\&=&\langle A_mx,x\rangle\\&=&\int_0^1x'(s)x(s) ds-\alpha\| x\|^2\\
&\le&\big (\frac{\epsilon^2 \|f\|^2}{2} -\alpha \big ) \|x\|^2 - \frac{1}{2} x^2(0)\cdot	
\end{eqnarray*}
Thus the operator  $_{_X}\!\!(A_{-1})$ is dissipative in $D((A_{-1}-\epsilon  B)|_X)$ for every $0<\epsilon\le \frac{\big ( 2 \alpha \big)^{1/2}}{\|f\|}$.
\\		
$\bullet$  Finally, the observation estimate follows from the fact that $h\ge c>0$ and that for any $x\in X, $ the mapping  $t\mapsto \|S(t)x\|$ is decreasing.\\
We conclude by Theorem \ref{T1} that for $\epsilon>0$ small enough,  the control $\nu(t)=-\epsilon {\bf 1}_{\{t\ge0:\; x(t)\ne0\}}$ guarantees  the exponentially stabilization of the system $(S_1)$.
\end{example}

\section{Conclusion}
In this paper we have shown that it is possible for  a linear system with dissipative dynamic, to be  exponentially stable under small  Desch-Schapacher  perturbations  of the dynamic. The main assumptions of sufficiency are formulated in terms of admissibility  and  observability assumptions  of unbounded linear operators. An explicit decay rate of the stabilized state is given.   The previous research on this problem concerned either bounded or Miyadera's type perturbations \cite{Liu,Ouz17}. The main stabilization  result is further applied to show the uniform exponential stabilization of  unbounded bilinear reaction diffusion and transport equations using a bang bang controller.


%

 \section*{Conflict of interest}

 The authors declare that they have no conflict of interest.



\end{document}